\title[Collatz mapping on $\Z_{10}^{+}$]{Collatz mapping on $\Z/10\Z$}
\author[B. Khanzadeh]{Benyamin Khanzadeh H.}
\address{}
\email{benjaminkhanzade319@gmail.com}
\newtheorem{lem}{Lemma}[section]
\theoremstyle{definition}
\newtheorem{rem}{Remark}[section]
\newtheorem{con}{Conjecture}
\newcommand{\N}{\mathbb{N}}
\newcommand{\Col}{\operatorname{Col}}
\newcommand{\PP}[1]{{\tt(P#1)}}
\begin{document}

\begin{abstract}
    We introduce the \emph{Collatz conjecture} and its history. Some definition that this conjecture has, will be expressed and with these we try to explain some good lemma to justify the main properties of the \emph{Collatz conjecture}. With these lemmas a situation made to go through to write some mainly properties of Collatz graph on $\mathbb{Z}/10\mathbb{Z}$. Then with something provided it will go to draw the graph on $\mathbb{Z}/10\mathbb{Z}$.
\end{abstract}
\maketitle
\section{Introduction}

The \emph{Collatz conjecture}, also known as the "$3x+1$" problem, is a conjecture in mathematics that concerns sequences of integer numbers. These sequences start with an arbitrary positive integer, and so each term is obtained from the previous one as follows: if the previous term is even, the next term is one half of this one, and if the previous term is odd, the next term is $3$ times the previous one plus $1$. The conjecture says that no matter what is the starting value, the sequence will always reach $1$. \cite{A}

More specifically, consider the \emph{Collatz function} as the map $\Col:\N\to\N$, defined by:\[\Col(x)=\left\{\begin{array}{ll}
\frac{x}{2}&\text{if }x\equiv0\mod2\\
3x+1&\text{if }x\equiv1\mod2
\end{array}\right.\]Thus, the Collatz conjecture says that, for every $x\in\N$, there is a positive integer $k$ such that $\Col^k(x)=1$. \cite{A}

The conjecture is named after Lothar Collatz, who introduced the idea in 1937, two years after receiving his doctorate \cite{C11}. However, it is also known as the \emph{Ulam conjecture} (after Stanisław Ulam), the \emph{Kakutani's problem} (after Shizuo Kakutani), the \emph{Thwaites conjecture} (after Sir Bryan Thwaites), the \emph{Hasse's algorithm} (after Helmut Hasse), or the \emph{Syracuse problem} \cite{C22,C33}.

The sequences of numbers involved here are also referred to as the \emph{hailstone sequences} or the \emph{hailstone numbers},  because the values are usually subject to multiple descents and ascents like hailstones in a cloud \cite{C55} or as wondrous numbers \cite{C77}.

Paul Erdős said about the Collatz conjecture: "mathematics may not be ready for such problems" \cite{C88}. He also offered US\$500 for its solution \cite{C99}. On the other hand, Jeffrey Lagarias stated in 2010 that the Collatz conjecture "is an extraordinarily difficult problem, completely out of reach of present day mathematics" \cite{C99110}.

\section{Main conjecture}

\subsection{Closed loops}

A \emph{closed loop} or simply a \emph{cycle} is a finite sequence $a=(a_0,\ldots,a_k)$ of positive integers, such that $a_k=a_0$ and $a_i=\Col^i(a_{i-1})$ for all $i\in\{1,\ldots,k\}$. Note that every closed loop $a$ as above generates an infinite family of cycles $a^m$ ($m\geq1$), by concatenating $m-1$ copies of $(a_1,\ldots,a_k)$ to the right of $a$, that is\[a^m=(a_0,\underbrace{a_1,\ldots,a_k}_{(m-1)-\text{copies}}).\]Hence, if $a$ is a closed loop as above, then $\Col^{mk}(a_0)=a_0$ for all $m\geq1$.

One of the most important problems about this conjecture is the presence of the closed loop $(1,4,2,1)$, because it creates complexity. Indeed, if $\Col^k(x)=1$, the existence of the mentioned closed loop implies that\[\Col^{3m}(\Col^k(x))=\Col^{3m+k}(x)=1\quad\text{for all}\quad m\geq1.\]

To avoid the contradiction given by the closed loop $(1,4,2,1)$, we will replace the Collatz function by the map $\Col_*:\N\to\N$ defined as follows:\[\Col_*(x)=\left\{\begin{array}{ll}
\frac{x}{2}&\text{if }x\equiv0\mod2\\
3x+1&\text{if }x\equiv1\mod2\text{ and }x>1\\
1&\text{if }x=1
\end{array}\right.\]

For $t\in\N$, we have\[\Col(2t+1)=3(2t+1)+1=6t+4\equiv 4\mod6,\qquad\Col(2t)=\frac{2t}{2}=t.\]Notice that, for $x\in\N$, $\Col^{-1}(\{x\})$ is formed by the element $2x$, and additionally, by the element $\frac{x-1}{3}$ only if $x\equiv4\mod6$.

\section{Classifications and fields}

%{\red We start by defining a set of natural numbers $\mathbb{N} = \{1,2,3,4,5,\cdots\}$ and write it as a community of classes, and with this definition we will arrive at a simpler set and express it as a field of $\mathbb{Z}$ expression. ($\mathbb{N} = \mathbb{Z}^{+}$)
%As mentioned in the introduction (inverse function part), each part of Collatz conjectures contains a series of distinct properties, but in this section we will classify these properties as simply as possible and make the algebraic properties we want from them.
%}

For every $m\in\Z$, denote by $[m]$ the class of it module ten, i.e.  $\Z/{10}\Z=\{[0],\ldots,[9]\}$. Note that $\N=[0]^+\sqcup\cdots\sqcup[9]^+$, where $[k]^+=[k]\cap\N$, that is:
\[\begin{array}{rcl}
[0]^+&=&\{10,20,\ldots\},\\[0.8mm]
[1]^+&=&\{1,11,21,\ldots\},\\[0.8mm]
&\cdots&\\[0.8mm]
[9]^+&=&\{9,19,29,\ldots\}.
\end{array}\]Hence, $[k]^+=\{10t+k\mid t\geq0\}=[0]^++\{k\}$.

As every natural number can be written as $\sum_{i=1}^na_i10^i$ with $0\leq a_i<10$, then\begin{equation}\label{000}[k]^+=\left\{k+\sum_{i=1}^na_i10^i\,\,\middle\vert\begin{array}{c}n\in\N,\,\,a_1,\ldots,a_n\in[0,9]\\k+a_1+\cdots+a_n\geq1\end{array}\!\right\}.\end{equation}

Now, according to the definitions above, we will study the properties of each of these classes, and by studying their properties, we will reach more general properties in the $\Z/10\Z$ set. With the help of these properties, we can generalize them to the set of $\N$ numbers.

\begin{lem}\label{001}
Let $k$ be a natural number. Then:
\begin{enumerate}
\item If $x\in[2k+1]^+$, then $\Col(x)\in[6k+4]^+$.
\item If $x\in[2k]^+$, then $\Col(x)\in [k]^+\cup[k+5]^+$.
\end{enumerate}
\end{lem}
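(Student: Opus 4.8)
The plan is to read the parity of $x$ directly off its residue modulo ten, which is legitimate because $2$ divides $10$, so the class of $x$ in $\Z/10\Z$ already determines its class in $\Z/2\Z$. Concretely, every element of $[2k+1]^+$ is odd and every element of $[2k]^+$ is even, so in each case exactly one branch of $\Col$ applies and there is no ambiguity about which formula to use. Once the branch is fixed, both claims reduce to an elementary congruence computation modulo ten, together with a check that the output is a positive integer so that it genuinely lies in a class of the form $[r]^+$. For part (1), since $x$ is odd I would use $\Col(x)=3x+1$; writing $x\equiv 2k+1\pmod{10}$ and multiplying the congruence through by $3$ gives $3x+1\equiv 3(2k+1)+1=6k+4\pmod{10}$, and positivity is immediate from $3x+1\geq 4$, so $\Col(x)\in[6k+4]^+$.

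For part (2), since $x$ is even I would use $\Col(x)=x/2$, and here the genuine subtlety appears: halving does not descend cleanly to $\Z/10\Z$, because $\gcd(2,10)=2\neq 1$, so knowing $x$ only modulo ten pins down $x/2$ merely modulo five, and lifting back to modulo ten produces two candidate classes. To make this precise I would write $x=2k+10s$ with $s\in\Z$, so that $\Col(x)=x/2=k+5s$, and then split on the parity of $s$. If $s$ is even then $5s\equiv 0\pmod{10}$ and $\Col(x)\in[k]^+$, while if $s$ is odd then $5s\equiv 5\pmod{10}$ and $\Col(x)\in[k+5]^+$; in either case $\Col(x)\in[k]^+\cup[k+5]^+$, and positivity follows from $x\geq 2$.

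The computations themselves are routine, so the only real obstacle is the structural point in part (2): recognizing that the failure of $2$ to be invertible modulo ten is exactly what forces the two-class disjunction, rather than a single output class. Put another way, $\Col(x)\bmod 10$ is determined not by $x\bmod 10$ but by $x\bmod 20$, with the two lifts $2k$ and $2k+10$ of the class $[2k]$ to $\Z/20\Z$ producing $[k]^+$ and $[k+5]^+$ respectively. The remaining bookkeeping—noting that $k$ ranges over all naturals, so that $[2k+1]$, $[6k+4]$, $[k]$, and $[k+5]$ are all to be read modulo ten, and that the representative $2k$ need not be minimal so $s$ may be negative—requires only a remark and does not affect the algebra $\Col(x)=k+5s$.
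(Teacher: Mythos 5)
Your proof is correct and follows essentially the same route as the paper's: part (1) is the direct congruence $3(2k+1)+1\equiv 6k+4\pmod{10}$, and part (2) hinges on the same case split forced by $2$ not being invertible modulo $10$. Your parametrization $x=2k+10s$ with a split on the parity of $s$ is just a cleaner packaging of the paper's digit-by-digit division, where the split is on the parity of the tens digit $a_1$.
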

\begin{proof}
(1) For $x\in[2k+1]^{+}$, there are $n\in\N$ and $a_1,\ldots,a_n\in[0,9]$ such that\[\Col(x)=3x+1=3\left((2k+1)+\sum_{i=1}^na_i10^i\right)+1\equiv 6x+4\mod10.\](2) Similarly, for $x\in[2k]^{+}$, there are $n\in\N$ and $a_1,\ldots,a_n\in[0,9]$ such that\[\Col(x)=\frac{x}{2}=\frac{1}{2}\left(2k+\sum_{i=1}^na_i10^i\right)=k+\sum_{i=1}^n\frac{a_i}{2}10^i=k+\sum_{i=2}^n\frac{a_i}{2}10^i+5a_1.\]If $a_1$ is odd, that is, $a_1=2t+1$ for some $t\in\N$, then\[\Col(x)=k+\sum_{i=2}^n\frac{a_i}{2}10^i+10t+5\equiv 5+k\mod10.\]Now, if $a_1$ is even, that is, $a_1=2t$ for some $t\in\N$, then\[\Col(x)=k+\sum_{i=2}^n\frac{a_i}{2}10^i+10t\equiv k\mod10.\]
\end{proof}

\begin{rem}\label{005}$[k]^{+} = k$\end{rem}
The following remark will be useful to construct a graph in Section \ref{003}.
\begin{rem}\label{004}
More specifically, by Lemma \ref{001} and Remark \ref{005}, we obtain the following properties:
\[\begin{array}{llll}
\PP{0}&\Col(0)\in [5]^+\cup[0]^+\quad&\PP{5}&\Col(5)\in[6]^+\\
\PP{1}&\Col(1)\in[4]^+\quad&\PP{6}&\Col(6)\in[8]^+\cup[3]^+\\
\PP{2}&\Col(2)\in[1]^+\cup[6]^+\quad&\PP{7}&\Col(7)\in[2]^+\\
\PP{3}&\Col(3)\in[0]^+\quad&\PP{8}&\Col(8)\in[4]^+\cup[9]^+\\
\PP{4}&\Col(4)\in[2]^+\cup[7]^+\quad&\PP{9}&\Col(9)\in[8]^+
\end{array}\]
\end{rem}

\section{Graph}\label{003}

%{\red In this part of the article, we intend to use the properties that we expressed and proved in the previous sections, and draw a graph that expresses all its properties.\\
%To draw a given graph $G(V,E)$, we need two elements of its vertices and edges, which we will express all of them in the future.\\
%The graph that we will draw in this section is a complete distribution to express the Collatz function in the set $\Col:\mathbb{N}\to\mathbb{N}\Rightarrow \Col:\mathbb{Z}_{10}^{+}\to \mathbb{Z}_{10}^{+}$ and we will prove later that we can extend the Collatz conjecture based on this graph and arrive at a proposition equivalent to the Collatz conjecture.}

%{\red In this part of the article, we want to express the properties of a graph so that we can draw a given graph.\\
%Given that the given graph is supposed to express the most basic properties of numbers in the Collatz system, and given that in the previous section we proved that the most basic properties of Collatz can be summarized in $\mathbb{Z}_{10}$ the vertices of the graph G can be written as follows :}

Let $G$ be the directed graph $(V,E)$ with vertices $V=[0,9]$ and the following edges\[E=\{(a,b)\mid\Col(a)\equiv b\mod10\}.\]

According to the definitions above, we obtain the following lemma that will be useful to draw the graph.
\begin{lem}\label{002}
For $x\in[0,9]$, we have:
\begin{enumerate}
\item $\deg(x)=1$ if and only if $x$ is odd.
\item $\deg(x)=2$ if and only if $x$ is even.
\end{enumerate}
\end{lem}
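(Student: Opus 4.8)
The plan is to read $\deg(x)$ as the number of out-neighbours of $x$, that is, the number of distinct $b\in[0,9]$ with $(x,b)\in E$, and to compute this number straight from the two cases of Lemma \ref{001}. The guiding observation is that, by the definition of $E$, a pair $(x,b)$ is an edge exactly when $\Col$ carries some element of $[x]^+$ into $[b]^+$; hence counting the out-neighbours of $x$ is the same as counting the distinct residue classes modulo ten that together make up the image $\Col([x]^+)$. Everything then reduces to deciding, for each $x$, how many such classes occur.

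First I would settle the odd case. If $x$ is odd I write $x=2k+1$, and Lemma \ref{001}(1) gives $\Col([x]^+)\subseteq[6k+4]^+$, a single class. Because $\Col$ is defined on every element of $[x]^+$, this class is actually reached, so $x$ has exactly one out-neighbour and $\deg(x)=1$; this is consistent with the odd rows \PP{1}, \PP{3}, \PP{5}, \PP{7}, \PP{9} of Remark \ref{004}.

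Next I would treat the even case, which carries the real content. If $x$ is even I write $x=2k$, and Lemma \ref{001}(2) gives $\Col([x]^+)\subseteq[k]^+\cup[k+5]^+$. Two points must be verified. The first is that $[k]$ and $[k+5]$ are distinct vertices, which holds because $5\not\equiv0\pmod{10}$. The second, and the one I expect to be the crux, is that \emph{both} classes are genuinely attained, so that the out-degree is $2$ and not $1$. For this I would revisit the computation inside the proof of Lemma \ref{001}(2): the image falls in $[k+5]^+$ precisely when the coefficient $a_1$ of $10$ in the expansion \eqref{000} of $x$ is odd, and in $[k]^+$ when $a_1$ is even. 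Since $[x]^+$ is infinite and contains numbers realizing either parity of $a_1$, both target classes occur, giving $\deg(x)=2$, in agreement with the even rows \PP{0}, \PP{2}, \PP{4}, \PP{6}, \PP{8}.

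Finally, the two biconditionals drop out of this dichotomy together with $1\neq2$. Every vertex of $[0,9]$ is either odd or even; the odd ones have degree $1$ and the even ones degree $2$, so $\deg(x)=1$ forces $x$ to be odd and $\deg(x)=2$ forces $x$ to be even, while the forward implications are exactly what the two cases above establish. The only step that needs genuine care is the ``both classes are attained'' claim in the even case; the rest is bookkeeping with Lemma \ref{001} and Remark \ref{005}.
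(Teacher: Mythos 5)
Your proposal is correct and follows the same route as the paper, whose entire proof is the one-line statement that the lemma is a consequence of Remark \ref{004} and Lemma \ref{001}. You simply make explicit the details the paper leaves implicit, in particular the worthwhile observation that in the even case both target classes $[k]^+$ and $[k+5]^+$ are actually attained (by choosing the tens digit $a_1$ of an element of $[x]^+$ with either parity), which is exactly what is needed for the out-degree to equal $2$ rather than merely being at most $2$.
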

\begin{proof}
It is a consequence of Remark \ref{004} and Lemma \ref{001}.
\end{proof}

By using Lemma \ref{001}, Remark \ref{005} and Lemma \ref{002}, the graph $G$ can be drawn as follows:
\[\begin{tikzpicture}[
    ->,>=stealth',
    auto,
    main/.style={
        circle,draw,
        font=\sffamily\Large\bfseries
    },
    node distance=2.5cm,
    shorten >=1pt,
    thick]
\node[main](0){$_0$};
\node[main](3)[below of=0]{$_3$};
\node[main](6)[right of=3]{$_6$};
\node[main](5)[right of=0]{$_5$};
\node[main](8)[below of=6]{$_8$};
\node[main](9)[left  of=8]{$_9$};
\node[main](4)[right of=8]{$_4$};
\node[main](2)[right of=6]{$_2$};
\node[main](7)[right of=4]{$_7$};
\node[main](1)[right of=2]{$_1$};
\path[every node/.style={font=\sffamily\small}]
    (0)edge node[above]{$\Col$}(5)
       edge[loop above]node{$\Col$}(0)
    (3)edge node[left]{$\Col$}(0)
    (6)edge node[above]{$\Col$}(3)
       edge node[left]{$\Col$}(8)
    (5)edge node[left]{$\Col$}(6)
	(8)edge[bend right]node[above]{$\Col$}(9)
    (9)edge[bend right]node[below]{$\Col$}(8)
	(8)edge node[below]{$\Col$}(4)
	(4)edge node[left]{$\Col$}(2)
	   edge node[below]{$\Col$}(7)
	(2)edge node[above]{$\Col$}(6)
	   edge node[above]{$\Col$}(1)	
	(7)edge node[left]{}(2)
	(1)edge node[right]{$\,\,\,\Col$}(4)
;\end{tikzpicture}\]

\section{Generalization}

%{\red In this section, according to the graph that we drew in the previous section and the theorems and lemmas that we have dealt with so far, we want to change the form of Collatz’s conjecture as follows:}
The Collatz conjecture can be expressed as follows.
\begin{con}[Generalized Collatz]
For $m\in\N$, the sequence $\Col^n(m)$ ($n\geq1$) will reach $1$, if and only if, the following properties hold:
\begin{enumerate}
\item[{\tt(C1)}]$\lim\limits_{n\to\infty}\Col^n(m)\notin\{m\}$.
\item[{\tt(C2)}]$\lim\limits_{n\to\infty}\Col^n(m)\neq\infty$.
\end{enumerate}
\end{con}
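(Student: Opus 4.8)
The plan is to read conditions \texttt{(C1)} and \texttt{(C2)} not as statements about an actual limit of $\Col^n(m)$ (which need not exist in the literal sense), but as the two ways a Collatz orbit can fail to reach $1$: condition \texttt{(C2)} excludes divergence to infinity, and \texttt{(C1)} is meant to exclude the orbit's falling into a nontrivial cycle. The backbone of the argument is therefore a trichotomy: for every $m\in\N$, the forward orbit $\{\Col_*^n(m):n\ge1\}$ either (a) is eventually equal to $1$, (b) is eventually periodic along a cycle avoiding $1$, or (c) is unbounded. Once this trichotomy and the mutual exclusivity of (a)--(c) are established, the biconditional reduces to the assertion that (a) holds precisely when (b) and (c) both fail.

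First I would dispose of the analytically easy half. An orbit is bounded if and only if it takes only finitely many values in $\N$, hence if and only if it is eventually periodic; so \texttt{(C2)} (no divergence) is equivalent to eventual periodicity, identifying the negation of case (c) with boundedness. For the forward implication, if the orbit reaches $1$ then under $\Col_*$ it is eventually constant at the fixed point $1$ (this is exactly why $\Col$ was replaced by $\Col_*$, killing the loop $(1,4,2,1)$ discussed before Section~\ref{003}); the orbit is then bounded, so \texttt{(C2)} holds, and for $m\neq1$ it does not return to $m$, so \texttt{(C1)} holds.

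The hard part is the reverse implication, and it is the full content of the Collatz conjecture. Granting \texttt{(C2)}, the orbit is eventually periodic; one must then show that the only admissible cycle is the fixed point $1$, i.e. that no nontrivial cycle exists. Here the residue graph $G$ on $\Z/10\Z$ from Section~\ref{003} does real but limited work: by Lemma~\ref{001} and Remark~\ref{004}, any integer cycle projects onto a closed walk in $G$, which constrains the sequence of residues traversed. The obstacle is that $G$ admits closed walks --- for instance the loop $8\leftrightarrow9$ --- that are perfectly consistent with unbounded or arbitrarily long integer trajectories, so the finite residue picture cannot by itself bound the integer values or forbid a large nontrivial cycle. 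Excluding such cycles, together with ruling out case (c) directly, is precisely the step that is at present out of reach. I would also flag a genuine gap in the formalization: \texttt{(C1)} as written only forbids the orbit's returning to $m$, and so does not literally exclude an arbitrary nontrivial cycle disjoint from $m$; this must be repaired (e.g. by replacing \texttt{(C1)} with ``the orbit is eventually periodic only at $1$'') before the stated equivalence can be made correct, let alone proved.
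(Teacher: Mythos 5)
The first thing to say is that the paper contains no proof of this statement: it is presented as a conjecture, and the only justification offered is the informal sentence that follows it (``if a number does not reach itself and also does not diverge infinitely, then that number will necessarily shrink to 1''). Your proposal is best read as a formalization of that sentence, and as such it is considerably more careful than the paper itself. Your trichotomy is sound: an orbit that repeats a value is eventually periodic, an orbit with pairwise distinct values must tend to infinity (any bounded set of integers can be visited only finitely often), so the negation of \texttt{(C2)} is exactly eventual non-periodicity, and the remaining dichotomy is ``reaches the fixed point $1$ of $\Col_*$'' versus ``enters a nontrivial cycle.'' You are also right that the backward implication is the entire Collatz conjecture and that the mod-$10$ graph $G$ of Section \ref{003} cannot decide it: $G$ records only residues, and its closed walks (e.g.\ $8\to9\to8$, or the loop at $0$) are compatible with integer orbits of essentially any behaviour.

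The gap you flag in \texttt{(C1)} is genuine, and it is a gap in the paper's statement rather than in your argument: ``the orbit does not return to $m$'' does not exclude the orbit's entering a nontrivial cycle that avoids $m$ (take $m=2a_0$ for $a_0$ on a hypothetical cycle), so \texttt{(C1)} and \texttt{(C2)} together do not literally force convergence to $1$ for that individual $m$. The same defect is present in the paper's own justifying sentence. Two further points you could have pressed: the conjecture is stated for $\Col$, not $\Col_*$, so for $m\in\{1,2,4\}$ the orbit does return to $m$ via the loop $(1,4,2,1)$ and the forward implication fails as literally written --- your silent switch to $\Col_*$ is the right repair, but it should be made explicit; and the expression $\lim_{n\to\infty}\Col^n(m)$ is undefined for a bounded non-convergent orbit, so \texttt{(C1)} must be restated in terms of the orbit set before either direction can even be assessed. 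In short, your proposal correctly identifies both what the paper is claiming and why that claim, as written, is neither proved in the paper nor provable in its current form.
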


Now we can claim that this conjecture is equivalent to the Collatz conjecture, because if a number does not reach itself and also does not diverge infinitely, then that number will necessarily shrink to 1. In fact, we can say that the graph we have drawn shows that if there is a loop, then what does that loop look like, and if a proof can be given for this conjecture, this graph can be used.

\end{document}